\newenvironment{theorem}[2][Theorem]{\begin{trivlist}
\item[\hskip \labelsep {\bfseries #1}\hskip \labelsep {\bfseries #2.}]}{\end{trivlist}}
\newenvironment{lemma}[2][Lemma]{\begin{trivlist}
\item[\hskip \labelsep {\bfseries #1}\hskip \labelsep {\bfseries #2.}]}{\end{trivlist}}
\newenvironment{remark}[2][Remark]{\begin{trivlist}
\item[\hskip \labelsep {\bfseries #1}\hskip \labelsep {\bfseries #2.}]}{\end{trivlist}}
\theoremstyle{definition}
\begin{document}

\title[Weak-type estimates for Calder\'on-Zygmund operators]{A different approach to endpoint weak-type estimates for Calder\'on-Zygmund operators}
\author{Cody B. Stockdale}
\address{Cody B. Stockdale, Department of Mathematics, Washington University in St. Louis, One Brookings Drive, St. Louis, MO, 63130, USA}
\email{codystockdale@wustl.edu}

\maketitle
\vspace{-.25in}
\begin{abstract}
We present a new proof of the classical weak-type $(1,1)$ estimate for Calder\'on-Zygmund operators. This proof is inspired by ideas of Nazarov, Treil, and Volberg that address the non-doubling setting. An application to a weighted weak-type inequality is also given.\\
\smallskip
\noindent \textbf{Keywords:} singular integrals; weak-type estimates; weighted inequalities.
\end{abstract}

\section{Introduction}
Let $T$ denote a Calder\'on-Zygmund singular integral operator. For a measurable set $A \subseteq \mathbb{R}^n$, the quantity $|A|$ denotes the Lebesgue measure of $A$. The ball with center $x\in \mathbb{R}^n$ and radius $r>0$ is denoted by $B(x,r)$, while the cube with center $x\in \mathbb{R}^n$ and side length $r>0$ is denoted by $Q(x,r)$. For a cube $Q$, the notation $rQ$ describes the cube with the same center as $Q$ and with side length $r$ times the length of $Q$. We use the notation $A \lesssim B$ to mean there exists $C_{n,T}>0$, possibly depending on $n$ or $T$, such that $A \leq C_{n,T}B$.

It is well-known that Calder\'on-Zygmund operators are bounded on $L^p(\mathbb{R}^n)$ for $1<p<\infty$ and are generally unbounded on $L^1(\mathbb{R}^n)$. For the endpoint case $p=1$, we instead have the following fundamental result known as the \emph{weak-type $(1,1)$ property}. 
\begin{theorem}{1}
Any Calder\'on-Zygmund operator $T$ satisfies 
$$
    \|Tf\|_{L^{1,\infty}(\mathbb{R}^n)}:=\sup_{\lambda>0}\lambda|\{|Tf|>\lambda\}|\lesssim \|f\|_{L^1(\mathbb{R}^n)}
$$
for all $f \in L^1(\mathbb{R}^n)$.
\end{theorem}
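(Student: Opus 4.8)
The plan is to split $f$ at height $\lambda$ into a bounded part -- handled by the $L^2$ theory -- and a sum of mean-zero atoms -- handled, in the spirit of Nazarov--Treil--Volberg, by a duality argument in which $T$ is tested only against bounded functions supported away from the atoms, so that nothing beyond the $L^2$ bound and the smoothness of the kernel is used. Fix $\lambda>0$ and $f\in L^1(\mathbb{R}^n)$. The Calder\'on--Zygmund decomposition of $f$ at height $\lambda$ produces pairwise disjoint cubes $\{Q_j\}$ with $\sum_j|Q_j|\lesssim\lambda^{-1}\|f\|_{L^1}$ and a splitting $f=g+b$, $b=\sum_j b_j$, where $\|g\|_{L^\infty}\lesssim\lambda$, $\|g\|_{L^1}\le\|f\|_{L^1}$, and each $b_j$ is supported in $Q_j$ with $\int b_j=0$ and $\sum_j\|b_j\|_{L^1}\lesssim\|f\|_{L^1}$. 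Put $\Omega^*=\bigcup_j CQ_j$ for a suitable dimensional constant $C$, so $|\Omega^*|\lesssim\lambda^{-1}\|f\|_{L^1}$. From the inclusion
\[
\{|Tf|>\lambda\}\subseteq\{|Tg|>\tfrac{\lambda}{2}\}\cup\Omega^*\cup\bigl(\{|Tb|>\tfrac{\lambda}{2}\}\setminus\Omega^*\bigr)
\]
it suffices to bound $\lambda$ times the measures of the first and the third sets by $\|f\|_{L^1}$.

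For the bounded part, use that $T$ is bounded on $L^2(\mathbb{R}^n)$ (any $L^p$, $1<p<\infty$, would do): by Chebyshev's inequality,
\[
\lambda\,\bigl|\{|Tg|>\tfrac{\lambda}{2}\}\bigr|\lesssim\lambda^{-1}\|Tg\|_{L^2}^2\lesssim\lambda^{-1}\|g\|_{L^2}^2\le\lambda^{-1}\|g\|_{L^\infty}\|g\|_{L^1}\lesssim\|f\|_{L^1}.
\]

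The heart of the argument is to show $\int_{(\Omega^*)^c}|Tb|\lesssim\|f\|_{L^1}$; Chebyshev then controls the third set. Since $\int_{(\Omega^*)^c}|Tb|\le\sum_j\int_{(\Omega^*)^c}|Tb_j|$, it is enough to prove $\int_{(\Omega^*)^c}|Tb_j|\lesssim\|b_j\|_{L^1}$ for each $j$. I would argue by duality: for a bounded, compactly supported $h$ with $\|h\|_{L^\infty}\le1$ and $\operatorname{supp}h\subseteq(\Omega^*)^c$, Fubini's theorem -- legitimate because $h$ is supported at positive distance from $Q_j$ -- gives
\[
\int\overline{h}\,Tb_j=\int_{Q_j}b_j(y)\,\Phi(y)\,dy,\qquad\Phi(y):=\int\overline{h(x)}\,K(x,y)\,dx ,
\]
and then, using $\int_{Q_j}b_j=0$,
\[
\Bigl|\int\overline{h}\,Tb_j\Bigr|=\Bigl|\int_{Q_j}b_j\bigl(\Phi-\langle\Phi\rangle_{Q_j}\bigr)\Bigr|\le\|b_j\|_{L^1}\,\operatorname{osc}_{Q_j}(\Phi).
\]
The crucial estimate is $\operatorname{osc}_{Q_j}(\Phi)\lesssim1$, uniformly in $j$ and $h$: because $h$ vanishes on $CQ_j$, for $y,y'\in Q_j$ we have
\[
|\Phi(y)-\Phi(y')|\le\int_{(CQ_j)^c}|K(x,y)-K(x,y')|\,dx\lesssim1 ,
\]
the last inequality being exactly the H\"ormander smoothness condition on $K$ (the integral of $|K(x,y)-K(x,y')|$ over the region where $|x-y|$ is large compared with $|y-y'|$ is uniformly bounded). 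Taking the supremum over such $h$ -- equivalently, letting the support of $h$ exhaust $(\Omega^*)^c$ -- gives $\int_{(\Omega^*)^c}|Tb_j|\lesssim\|b_j\|_{L^1}$, and summing over $j$ completes the proof.

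The step I expect to require the most care is this last one, the estimate for the atomic part, and it is where the argument parts ways with the textbook proof. Two things need attention: handling the duality cleanly -- restricting to compactly supported $h$, or equivalently working with truncations of $T$, so that $\Phi$ is given by an absolutely convergent integral on $Q_j$ -- and extracting the \emph{uniform} oscillation bound from nothing but the standard smoothness condition on $K$. The benefit of organizing the proof this way is that no covering lemma and no doubling hypothesis on the measure is ever invoked -- only the $L^2$ bound and the kernel regularity -- which is precisely what should make the scheme adapt to the weighted endpoint inequality promised in the abstract.
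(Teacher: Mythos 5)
Your proof is correct, but it is \emph{not} the paper's argument; it is the classical Calder\'on--Zygmund proof, with the atomic part phrased through duality against $T^{*}h$ rather than directly through Fubini (the content is the same as the paper's Lemma~1). The essential difference is that you invoke the standard CZ decomposition, which engineers mean-zero pieces $b_j$ by replacing $f$ on the selected cubes with its local averages. The paper's entire point is to avoid this: it sets $\Omega=\{f>\lambda\}$, takes a Whitney decomposition $\Omega=\bigcup Q_i$, and uses the naive splitting $g=f\mathbbm{1}_{\Omega^{c}}$, $b_i=f\mathbbm{1}_{Q_i}$, so the $b_i$ do \emph{not} have mean zero and no averages of $f$ are ever computed. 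Cancellation is then injected after the fact by subtracting $\lambda\mathbbm{1}_{E_i}$, where $E_i$ is an explicitly constructed Borel set centered at $c_i$ with $|E_i|=\lambda^{-1}\int_{Q_i}b_i$; the pieces $b_i-\lambda\mathbbm{1}_{E_i}$ have mean zero by construction and feed into the same Fubini/H\"ormander estimate you use, at the cost of one extra term $|\{|T\mathbbm{1}_E|>\tfrac14\}|$, which is harmless because $|E|\lesssim\lambda^{-1}\|f\|_{L^{1}}$, so the $L^{2}$ bound handles it exactly as it handles $g$. Doubling is used exactly once, to control $|E^{*}|\lesssim|E|$ (Lemma~2). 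Your closing remark---that your scheme invokes ``no covering lemma and no doubling hypothesis''---is the opposite of accurate for the argument you actually wrote: the CZ stopping-time construction \emph{is} a covering lemma, and the bound $\|g\|_{L^{\infty}}\lesssim\lambda$ on the selected cubes uses doubling of Lebesgue measure through $|\widehat{Q}|=2^{n}|Q|$ in the maximality step, which is precisely what fails for non-doubling measures and what motivated Nazarov--Treil--Volberg (and this paper) to drop the CZ decomposition. What your route buys is brevity and familiarity; what the paper's route buys is a demonstration that neither the averaging construction nor the built-in mean-zero property is needed, isolating the (modest) role of doubling to a single measure-dilation estimate.
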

The boundedness of $T$ on $L^p(\mathbb{R}^n)$ for $1<p<\infty$ follows from Theorem 1 and the Marcin-\\
kiewicz interpolation theorem, see \cites{Grafakos1,Grafakos2}.

Theorem 1 was originally proved using the Calder\'on-Zygmund decomposition. This decomposition relies on the doubling property, which for a Borel measure $\mu$ on a space $X$ means there exists $C_{\mu}>0$ such that 
$$
    \mu(B(x,2r))\leq C_{\mu}\mu(B(x,r))
$$ 
for all $x \in X$ and all $r>0$. In \cite{NTV1998}, Nazarov, Treil, and Volberg recovered the basic theory of Calder\'on-Zygmund operators in a setting where the doubling property of the underlying measure is replaced by the following polynomial growth condition: there exists $n, C_{\mu}>0$ such that 
$$
    \mu(B(x,r))\leq C_{\mu}r^n
$$
for all $x \in X$ and all $r>0$. In particular, they proved the weak-type $(1,1)$ inequality. Since the doubling property is not available in this setting, their proof avoids the Calder\'on-Zygmund decomposition.

Unlike the setting of \cite{NTV1998}, the Euclidean setting of Theorem 1 allows the doubling property. We use the doubling property to obtain the main result of Section 3 -- a new simple proof of Theorem 1 motivated by the ideas of Nazarov, Treil, and Volberg. 

Following \cite{NTV1998}, the weak-type $(1,1)$ property reduces to proving
$$
	\|T\nu\|_{L^{1,\infty}(\mathbb{R}^n)}\lesssim \|\nu\|,
$$
where $\nu$ is a linear combination of point-mass measures and $\|\nu\|$ denotes the total variation of $\nu$. This inequality involves approximating $\nu$ by appropriately constructed Borel sets, and then it is left to estimate a final term using the size condition of the Calder\'on-Zygmund kernel, a duality trick involving the adjoint of $T$, and control of the maximal truncation operator. Using the doubling property, the weak-type estimate on point-mass measures, the size condition of the kernel, duality, and the maximal truncation operator control are no longer needed. Instead, we obtain cancellation by directly approximating with explicitly constructed Borel sets and, due to the doubling property of Lebesgue measure, we may easily bound the remaining term. 

The Nazarov-Treil-Volberg technique can be adapted to handle more general situations; see, for example, the proof of the weak-type $\left(1,\ldots,1;\frac{1}{m}\right)$ estimate for $m$-multilinear Calder\'on-Zygmund operators given by the author and Wick in \cite{SW2019}. Another application is given in Section 4, where a weak-type $(1,1)$ inequality involving $A_p$ weights is proved. A locally integrable function $w$ on $\mathbb{R}^n$ is called a \emph{weight} if $w(x)>0$ for almost every $x \in \mathbb{R}^n$. For $1< p < \infty$, the class $A_p$ consists of all weights $w$ satisfying the $A_p$ condition 
$$
    [w]_{A_p}:=\sup_{Q}\left(\frac{1}{|Q|}\int_{Q}w(x)dx\right)\left(\frac{1}{|Q|}\int_{Q}w(x)^{1-p'}dx\right)^{p-1}<\infty,
$$
where $p'$ is the H\"older conjugate of $p$ and the supremum is taken over all cubes $Q \subseteq \mathbb{R}^n$. 
For a weight $w$ and $A \subseteq \mathbb{R}^n$, the quantity $w(A)$ represents $\int_A w(x)\,dx$. Notice that if $w \in A_p$, then $w(x)dx$ is a doubling measure with 
$$
    w(Q(x,ar))\leq a^{np}[w]_{A_p}w(Q(x,r))
$$
for all $x \in \mathbb{R}^n$, $r>0$, and $a>1$; see \cites{Grafakos1,Grafakos2}. \begin{theorem}{2}
If $1< p<\infty$ and $w \in A_p$, then 
$$
    \|T(fw)w^{-1}\|_{L^{1,\infty}(w)} \lesssim [w]_{A_p}\max\{p,\log(e+[w]_{A_p})\}\|f\|_{L^1(w)}
$$
for all $f \in L^1(w)$.
\end{theorem}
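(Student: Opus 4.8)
The plan is to rerun the Section 3 proof of Theorem 1 with Lebesgue measure replaced by $w\,dx$ in the Borel-set construction, carefully tracking the dependence on $[w]_{A_p}$ and $p$ that enters through the doubling bound $w(aQ)\le a^{np}[w]_{A_p}w(Q)$ and through a direct application of the $A_p$ condition.

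First I would make the usual reduction. Since $f\mapsto T(fw)w^{-1}$ is linear and $\|f\|_{L^1(w)}=\|fw\|_{L^1(\mathbb{R}^n)}$, an approximation argument reduces matters to showing
$$
w\big(\{x:|T\nu(x)|>\lambda w(x)\}\big)\lesssim [w]_{A_p}\max\{p,\log(e+[w]_{A_p})\}\,\frac{\|\nu\|}{\lambda}
$$
for every $\lambda>0$ and every finite linear combination of point masses $\nu=\sum_j a_j\delta_{x_j}$; splitting into positive and negative parts, I may assume $a_j>0$. For each $j$ I would take $Q_j$ to be the cube centered at $x_j$ with $w(Q_j)=a_j/\lambda$ (possible since $w\,dx$ is nonatomic), so that $\sum_j w(Q_j)=\|\nu\|/\lambda$ and the $w$-density $a_j/w(Q_j)$ used to spread $a_j\delta_{x_j}$ out over $Q_j$ equals precisely $\lambda$.

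Next, following Section 3, I would write $\nu=g\,dx+b$, where $g\,dx$ is a Borel approximation obtained by replacing each $a_j\delta_{x_j}$ by a measure of the same mass carried by a selected subfamily of the $Q_j$---arranged so that $g/w\lesssim\lambda$ and $\|g\|_{L^1(\mathbb{R}^n)}\lesssim\|\nu\|$---and $b=\sum_j b_j$ with $b_j$ supported on $Q_j$, of total mass zero, and $\|b_j\|\lesssim a_j$. Since $\{|T\nu|>\lambda w\}\subseteq\{|Tg|>\tfrac{\lambda}{2}w\}\cup\{|Tb|>\tfrac{\lambda}{2}w\}$, the two pieces can be treated separately. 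The bad piece is the favorable one: Chebyshev's inequality with respect to $w\,dx$ cancels the weight, giving $w(\{|Tb|>\tfrac{\lambda}{2}w\}\setminus\Omega)\le\tfrac{2}{\lambda}\int_{\Omega^c}|Tb|\,dx$, and the kernel-regularity estimate applied to the mean-zero pieces $b_j$ bounds $\int_{\Omega^c}|Tb|\,dx$ by a constant times $\|\nu\|$ whenever $\Omega$ contains an appropriate dilate of $\bigcup_j Q_j$. For the good piece I would pass to the dual weight $\sigma:=w^{1-p'}\in A_{p'}$, which satisfies $[\sigma]_{A_{p'}}=[w]_{A_p}^{p'-1}$, and use
$$
w\big(\{|Tg|>\tfrac{\lambda}{2}w\}\big)\le\Big(\tfrac{2}{\lambda}\Big)^{p'}\int|Tg|^{p'}\sigma\,dx=\Big(\tfrac{2}{\lambda}\Big)^{p'}\|Tg\|_{L^{p'}(\sigma)}^{p'},
$$
which a weighted norm inequality for $T$ together with the structure of $g$ turns into a power of $[\sigma]_{A_{p'}}$ times $\|\nu\|/\lambda$.

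The crux, and where I expect the real work to be, is producing exactly the factor $[w]_{A_p}\max\{p,\log(e+[w]_{A_p})\}$---that is, keeping the power of $[w]_{A_p}$ equal to $1$ and the growth in $p$ linear. The difficulty is that the obvious choice $\Omega=\bigcup_j\gamma_n Q_j$, with $\gamma_n$ the fixed dilation the smoothness estimate seems to require, gives $w(\Omega)\le\gamma_n^{np}[w]_{A_p}\|\nu\|/\lambda$, which is exponential in $p$ and so inadmissible. To get around this I would use a much smaller dilation, of size roughly $\gamma=\big(\max\{p,\log(e+[w]_{A_p})\}\big)^{1/(np)}$, so that $\gamma^{np}[w]_{A_p}$ is of the right size, and then dispose of the thin leftover shells $\gamma Q_j\setminus Q_j$---where the smoothness estimate breaks down---by a direct argument: on such a shell $|Tb_j|/w\gtrsim\lambda$ can only occur where $w$ is small, or a truncated maximal average of $w$ is large, compared to $w(Q_j)/|Q_j|=a_j/(\lambda|Q_j|)$, and such sets are controlled by the trivial bound $w(\{x\in\gamma Q_j:w(x)<t\})\le t|\gamma Q_j|$ and by the self-improvement (reverse H\"older inequality) of $\sigma\in A_{p'}\subseteq A_\infty$; the logarithm and the factor $p$ emerge from optimizing the thresholds in this splitting. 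I would also check---substituting the weak-type $(p',p')$ bound for $T$ for the strong one should the latter carry too large a power of $[\sigma]_{A_{p'}}$ as $p\to1$---that the good piece contributes only a first power of $[w]_{A_p}$.
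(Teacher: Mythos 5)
Your overall architecture (spread the mass of the bad pieces over sets of prescribed $w$-measure so that the weight cancels in a Chebyshev step, then treat the spread-out ``good'' measure by Chebyshev against the dual weight and a sharp weighted bound for $T$) is the same in spirit as the paper's. But the decisive quantitative idea is missing, and it sits exactly where you defer the work. Running your good-part estimate with the exponent $p'$ and $\sigma=w^{1-p'}$, Hyt\"onen's bound gives $\|Tg\|_{L^{p'}(\sigma)}^{p'}\lesssim (pp')^{p'}[\sigma]_{A_{p'}}^{p'\max\{1,p-1\}}\|g\|_{L^{p'}(\sigma)}^{p'}$, and since $[\sigma]_{A_{p'}}=[w]_{A_p}^{1/(p-1)}$ and $\|g\|_{L^{p'}(\sigma)}^{p'}\approx\lambda^{p'-1}\|\nu\|$, the good term comes out as $(pp')^{p'}[w]_{A_p}^{p'\max\{1,\frac{1}{p-1}\}}\|\nu\|/\lambda$ --- already $[w]_{A_p}^{2}$ at $p=2$, and blowing up as $p\to1$ --- not the linear $[w]_{A_p}\max\{p,\log(e+[w]_{A_p})\}$. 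Your fallback of switching to a weak-type $(p',p')$ bound cannot repair this: because the level $\lambda w(x)$ varies pointwise, the only way to convert $w(\{|Tg|>\tfrac{\lambda}{2}w\})$ into something a norm inequality controls is the Chebyshev step, which needs the strong $L^{r'}(w^{1-r'})$ estimate. The paper's proof (following Ombrosi--P\'erez--Recchi) gets the stated constant by choosing an auxiliary exponent $r=1+\max\{p,\log(e+[w]_{A_p})\}$, using $w\in A_r$ with $[w]_{A_r}\le[w]_{A_p}$, $[w^{1-r'}]_{A_{r'}}=[w]_{A_r}^{r'-1}$, Theorem 3 on $L^{r'}(w^{1-r'})$, and the elementary facts $r^{r'}\le4\max\{p,\log(e+[w]_{A_p})\}$ and $[w]_{A_p}^{r'}\le e[w]_{A_p}$; this choice of $r$ is precisely where the logarithm comes from, and it is also needed a second time for the term $w(\{|T(w\mathbbm{1}_E)|w^{-1}>\tfrac14\})$. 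Nothing in your sketch substitutes for it.

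On what you call the crux: you are right that dilating by a fixed factor costs $a^{np}[w]_{A_p}$ (the paper's Remark 1 indeed carries $(2\sqrt n)^{np}$, which the paper absorbs into its implicit constant), but your proposed cure is only a sketch. The claim that on the shells ``the logarithm and the factor $p$ emerge from optimizing the thresholds'' in a level-set/reverse-H\"older splitting is an assertion, not an argument; nowhere do you show that this mechanism produces factors bounded by $\max\{p,\log(e+[w]_{A_p})\}$, and with $\gamma-1\sim\log(\max\{p,\log(e+[w]_{A_p})\})/(np)$ the smoothness estimate is genuinely unavailable on the annulus $2\sqrt n\,Q_j\setminus\gamma Q_j$, so some explicit substitute (e.g.\ a size-condition integral giving a $\log\frac{C_n}{\gamma-1}$ factor) must actually be carried out and summed. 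In addition, your opening reduction to finite combinations of point masses is asserted rather than proved; transferring the weak-type bound from discrete measures back to $f\in L^1(w)$ for the operator $T(fw)w^{-1}$ requires a limiting argument that the paper deliberately avoids --- it works directly with continuous compactly supported $f$, takes $\Omega=\{f>\lambda\}$ with a Whitney decomposition, and builds the sets $E_i$ with $w(E_i)=a_i/\lambda$, so that the bad-part cancellation is exactly Lemma 1 and no point-mass approximation is needed. As written, the proposal identifies the bad-part geometry as the main difficulty and leaves the good part to a ``check,'' whereas the theorem's stated constant is won or lost in the good part.
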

Theorem 2 was proved using the Calder\'on-Zygmund decomposition by Ombrosi, P\'erez, and Recchi in \cite{OPR2016}. A new proof of Theorem 2 is given in Section 4. Similar weighted weak-type inequalities appear in various forms, see \cites{CUMP2005, LOP2019, CRR2018, OP2016}.

We compare the Calder\'on-Zygmund decomposition proof of Theorem 1 and the proof given in Section 3. To prove the weak-type $(1,1)$ property, one shows $$|\{|Tf|>\lambda\}|\lesssim \frac{1}{\lambda}\|f\|_{L^1(\mathbb{R}^n)}$$ for all $\lambda>0$ and all $f\in L^1(\mathbb{R}^n)$. Both techniques involve decomposing $f$ into summands, $$
    f=g+b=g+\sum_{i=1}^{\infty}b_i,
$$ 
where $g$ is ``good'' and $b$ is ``bad,'' and then controlling 
$$
    |\{|Tf|>\lambda\}|\leq \left|\left\{|Tg|>\frac{\lambda}{2}\right\}\right|+\left|\left\{|Tb|>\frac{\lambda}{2}\right\}\right|.
$$ 
In both arguments, the term involving $g$ is handled by using Chebyshev's inequality, the boundedness of $T$ on $L^2(\mathbb{R}^n)$, and the $L^{\infty}(\mathbb{R}^n)$ control of $g$. The terms involving $b$ are estimated differently.

Much of the effort in the Calder\'on-Zygmund decomposition method is spent in carefully decomposing $f$ into its ``good'' and ``bad'' parts so that the functions $b_{i}$ have mean value zero and have useful $L^1(\mathbb{R}^n)$ control. This decomposition typically involves averages of $f$ and the use of the doubling property. After defining an exceptional set, $\Omega^*$, in terms of the supports of the $b_i$, one estimates 
$$
    \left|\left\{|Tb|>\frac{\lambda}{2}\right\}\right| \leq \left|\Omega^*\right|+ \left|\left\{x \in \mathbb{R}^n\setminus \Omega^*: |Tb(x)|>\frac{\lambda}{2}\right\}\right|.
$$ 
The first term is controlled due to properties of the Calder\'on-Zygmund decomposition and the doubling property. The final term is controlled using cancellation of the $b_{i}$, the smoothness assumption of the kernel of $T$, and the $L^1(\mathbb{R}^n)$ control of the $b_i$.

Using ideas from \cite{NTV1998}, the decomposition of $f$ into its ``good'' and ``bad'' parts is more direct. The exceptional set is defined explicitly as
$$
    \Omega:=\{|f|>\lambda\},
$$
then $g$ and $b$ are defined by
$$
    g:=f\mathbbm{1}_{\mathbb{R}^n\setminus \Omega}\quad\quad\text{and}\quad\quad b:=f\mathbbm{1}_{\Omega}.
$$
The $b_i$ are defined by applying a Whitney decomposition to write $\Omega$ as a disjoint union of cubes and restricting $b$ to each cube. To introduce cancellation in the $b_i$, Borel sets, $E_i$, of appropriate measure are constructed around the center of $\text{supp}(b_i)$, and a related set, $E^*$, is included in the exceptional set. Adding and subtracting $\lambda T(\mathbbm{1}_E)$, where $E$ is the union of the $E_i$, one estimates 
$$
   \,\,\, \left|\left\{|Tb|>\frac{\lambda}{2}\right\}\right|\leq \left|\Omega\cup E^* \right| + \left|\left\{x\in\mathbb{R}^n\setminus(\Omega\cup E^*): |T(b-\lambda\mathbbm{1}_E)(x)|>\frac{\lambda}{4}\right\}\right|
$$
$$
    +\left|\left\{|T(\mathbbm{1}_E)|>\frac{1}{4}\right\}\right|.
$$ 
The first term is handled with Chebyshev's inequality and the doubling property. The second term is controlled since $\mathbbm{1}_E$ introduces cancellation that allows for the use of the smoothness assumption of the kernel. The final term is controlled in a way similar to the the term involving the ``good'' function.

Our proof has some benefits over the Calder\'on-Zygmund decomposition technique. For example, the decomposition used to write $f=g+b$ in this argument does not involve studying averages of $f$ or the doubling property. The doubling property is only used later in the proof to gain control over $|E^*|$. Also, this proof shows that $L^1$ control of the $b_i$ is not necessary for the weak-type $(1,1)$ estimate and demonstrates a measure-theoretic method to gain cancellation in the $b_i$. 

Relevant definitions and lemmas are described in Section 2. Section 3 includes the new proof of Theorem 1. Section 4 contains the application to Theorem 2.

I would like to thank Brett Wick for conversations regarding this article.


\section{Preliminaries}
We say $K:(\mathbb{R}^{n} \times \mathbb{R}^n)\setminus\{(x,y):x=y\} \rightarrow \mathbb{C}$ is a \emph{Calder\'on-Zygmund kernel} if
\begin{enumerate}
\item (\emph{size}) \[|K(x,y)| \lesssim \frac{1}{|x-y|^{n}}\]
for all $x,y\in \mathbb{R}^n$ with $x\neq y$,
\item (\emph{smoothness}) there exists $\delta >0$ such that 
$$
    |K(x,y)-K(x',y)| \lesssim \frac{|x-x'|^{\delta}}{|x-y|^{n+\delta}}
$$
whenever $\displaystyle |x-x'| \leq \frac{1}{2}|x-y|$, and 
$$
    |K(x,y)-K(x,y')|\lesssim \frac{|y-y'|^{\delta}}{|x-y|^{n+\delta}}
$$
whenever $\displaystyle |y-y'| \leq \frac{1}{2}|x-y|$.
\end{enumerate}
We say a a linear operator $T$ is a \emph{Calder\'on-Zygmund operator with kernel $K$} if $K$ is a Calder\'on-Zygmund kernel, $T$ extends to a bounded operator on $L^2(\mathbb{R}^n)$, and $T$ is given by
$$
    Tf(x) = \int_{\mathbb{R}^n} K(x,y)f(y)dy
$$ 
for smooth compactly supported $f$ and almost every $x \in \mathbb{R}^n\setminus\text{supp}(f)$.

\begin{lemma}{1}
If $f\in L^1(\mathbb{R}^n)$ is supported on $Q(x,r)$ and $\int_{Q(x,r)}f(y)dy=0$ for some $x\in\mathbb{R}^n$ and $r>0$, then 
$$
    \int_{\mathbb{R}^n\setminus Q(x,2\sqrt{n}r)}|Tf(y)|dy\lesssim \|f\|_{L^1(\mathbb{R}^n)}.
$$
\end{lemma}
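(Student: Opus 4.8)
The plan is to use the cancellation hypothesis $\int_{Q(x,r)} f(y)\,dy = 0$ to subtract the value of the kernel at the center of the supporting cube, and then to estimate the resulting difference by the smoothness of $K$ in its second variable. First I would fix a point $y \in \mathbb{R}^n \setminus Q(x, 2\sqrt{n}\,r)$. Since such a $y$ lies outside $\mathrm{supp}(f) \subseteq Q(x,r)$, the integral representation of $T$ is available at $y$ — for general $f \in L^1(\mathbb{R}^n)$ supported on $Q(x,r)$ this follows from the representation for smooth compactly supported functions by a routine $L^1$-density argument — and using $\int_{Q(x,r)} f(y)\,dy = 0$ I would write
$$
    Tf(y) = \int_{Q(x,r)} K(y,z)f(z)\,dz = \int_{Q(x,r)} \big(K(y,z) - K(y,x)\big)f(z)\,dz.
$$

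Next I would verify the geometric conditions needed to apply the smoothness estimate. If $z \in Q(x,r)$, then $|z - x| \le \tfrac{\sqrt{n}}{2}r$, half the diagonal of $Q(x,r)$, while $y \notin Q(x, 2\sqrt{n}\,r)$ forces $|y - x| \ge \|y - x\|_{\infty} > \sqrt{n}\,r$; hence $|z - x| \le \tfrac12 |y - x|$. Applying the smoothness condition in the second variable of $K$ (with the center $x$ and the point $z$ in the roles of $y$ and $y'$) gives
$$
    |K(y,z) - K(y,x)| \lesssim \frac{|z - x|^{\delta}}{|y - x|^{n + \delta}} \lesssim \frac{r^{\delta}}{|y - x|^{n + \delta}}
$$
for every $z \in Q(x,r)$. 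Inserting this into the displayed formula for $Tf(y)$, taking absolute values, integrating in $y$ over $\mathbb{R}^n \setminus Q(x, 2\sqrt{n}\,r)$, and using Tonelli's theorem, I would arrive at
$$
    \int_{\mathbb{R}^n \setminus Q(x, 2\sqrt{n}\,r)} |Tf(y)|\,dy \lesssim \int_{Q(x,r)} |f(z)| \left( r^{\delta} \int_{\{|y - x| > \sqrt{n}\,r\}} \frac{dy}{|y - x|^{n + \delta}} \right) dz.
$$

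The inner integral is a standard polar-coordinates tail estimate, $\int_{\{|y - x| > \sqrt{n}\,r\}} |y - x|^{-n-\delta}\,dy \lesssim (\sqrt{n}\,r)^{-\delta} \lesssim r^{-\delta}$, so the parenthetical factor is bounded by a constant depending only on $n$ and $\delta$, uniformly in $z$; what remains is $\lesssim \int_{Q(x,r)} |f(z)|\,dz = \|f\|_{L^1(\mathbb{R}^n)}$, which is the claim. I do not expect a real obstacle here: the only points that require any care are justifying the pointwise integral formula for $Tf$ off the support of $f$ when $f$ is merely integrable (handled by approximating $f$ in $L^1$ by smooth functions supported in a slightly larger cube) and the constant-chasing showing $|z - x| \le \tfrac12 |y - x|$, which is exactly why the dilation factor appearing in the statement is $2\sqrt{n}$ rather than $2$.
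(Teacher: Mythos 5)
Your proposal is correct and follows essentially the same route as the paper's proof: use the cancellation to replace $K(y,z)$ by $K(y,z)-K(y,x)$, check that $|z-x|\leq \tfrac12|y-x|$ off $Q(x,2\sqrt{n}r)$, apply the kernel smoothness in the second variable, and finish with Tonelli and the standard tail integral estimate. The only differences are cosmetic (you bound $|z-x|^{\delta}$ by $r^{\delta}$ and integrate over $\{|y-x|>\sqrt{n}r\}$ rather than over $\{|y-x|\geq 2|x-z|\}$, and you explicitly note the density argument for the integral representation), so there is nothing to correct.
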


\begin{proof}
First, notice that since $\int_{Q(x,r)}f(y)dy=0$ and $\text{supp}(f)\subseteq Q(x,r)$, 
$$
    |Tf(y)| = \left|\int_{Q(x,r)}K(y,z)f(z)dz\right|=\left|\int_{Q(x,r)} (K(y,z)-K(y,x))f(z)\,dz\right|.
$$ 
Therefore, using Fubini's theorem and the smoothness estimate of $K$, we see
\begin{align*}
\int_{\mathbb{R}^n\setminus Q(x,2\sqrt{n}r)}|Tf(y)|\,dy &\leq \int_{\mathbb{R}^n\setminus Q(x,2\sqrt{n}r)}\int_{Q(x,r)}|K(y,z)-K(y,x)||f(z)|\,dzdy\\
&=\int_{Q(x,r)}|f(z)|\int_{\mathbb{R}^n\setminus Q(x,2\sqrt{n}r)}|K(y,z)-K(y,x)|\,dydz\\
&\leq\int_{Q(x,r)}|f(z)|\int_{d(x,y)\ge 2d(x,z)} |K(y,z)-K(y,x)|\,dydz\\
&\lesssim \int_{Q(x,r)}|f(z)|\int_{d(x,y)\ge 2d(x,z)} \frac{|x-z|^{\delta}}{|x-y|^{n+\delta}}\,dydz\\
&\lesssim \|f\|_{L^1(\mathbb{R}^n)}.
\end{align*}
\end{proof}

\begin{lemma}{2}
Let $\mu$ be a doubling measure on $\mathbb{R}^n$ such that 
$$
    \mu(Q(x,ar))\leq C_{\mu,a} \mu(Q(x,r))
$$
for all $x \in \mathbb{R}^n$, $r>0$, and $a >1$. If $N$ is a positive integer, then 
$$
    \mu\left(\bigcup_{j=1}^N Q(x_j,ar_j)\right)\leq C_{\mu,a}\mu\left(\bigcup_{j=1}^N Q(x_j,r_j)\right)
$$
for all $x_1,\ldots, x_N \in \mathbb{R}^n$, $r_1,\ldots,r_N>0$, and $a>1$. 
\end{lemma}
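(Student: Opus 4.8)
The plan is to reduce the union of the dilated cubes to a union over a pairwise disjoint subfamily, after which the single-cube doubling hypothesis applies directly. First I would list the cubes so that $r_1 \ge r_2 \ge \cdots \ge r_N$ and run a Vitali-type selection: processing $j = 1, \dots, N$ in this order, put $Q(x_j, r_j)$ into the selected family precisely when it is disjoint from every cube already selected. This produces pairwise disjoint cubes $Q(x_{j_1}, r_{j_1}), \dots, Q(x_{j_M}, r_{j_M})$ with the property that every $Q(x_j, r_j)$ meets some selected $Q(x_{j_k}, r_{j_k})$ with $r_{j_k} \ge r_j$ (if $Q(x_j,r_j)$ was not itself selected, then when it was processed it was blocked by a previously selected, hence at least as large, cube).

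Next I would record the elementary geometric fact that this disjointification costs only a bounded loss in the dilation parameter. If $Q(x_j,r_j)$ meets $Q(x_{j_k},r_{j_k})$ and $r_j \le r_{j_k}$, then $|x_j - x_{j_k}|_\infty \le \tfrac12(r_j + r_{j_k}) \le r_{j_k}$, so any $y \in Q(x_j, ar_j)$ satisfies $|y - x_{j_k}|_\infty \le \tfrac{a}{2}r_j + r_{j_k} \le \tfrac{a+2}{2}r_{j_k}$; that is, $Q(x_j, ar_j) \subseteq Q(x_{j_k}, (a+2)r_{j_k})$. Therefore $\bigcup_{j=1}^N Q(x_j, ar_j) \subseteq \bigcup_{k=1}^M Q(x_{j_k}, (a+2)r_{j_k})$, and applying subadditivity, then the doubling hypothesis with dilation factor $a+2 > 1$, then disjointness of the selected cubes (each of which is one of the $Q(x_j,r_j)$, hence sits inside $\bigcup_{j=1}^N Q(x_j,r_j)$), gives
\[
\mu\Bigl(\bigcup_{j=1}^N Q(x_j, ar_j)\Bigr) \le \sum_{k=1}^M \mu\bigl(Q(x_{j_k},(a+2)r_{j_k})\bigr) \le C_{\mu, a+2}\sum_{k=1}^M \mu\bigl(Q(x_{j_k},r_{j_k})\bigr) \le C_{\mu, a+2}\,\mu\Bigl(\bigcup_{j=1}^N Q(x_j, r_j)\Bigr),
\]
which is the asserted inequality with a constant of the claimed type.

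The step I expect to be delicate is the second one: one must ensure that passing to a disjoint subfamily costs only an additive, dimension-free increase ($a \mapsto a+2$) in the dilation factor, so that the doubling hypothesis can still be invoked with a constant depending only on $\mu$ and $a$. If one insists on the literal constant $C_{\mu,a}$ in place of $C_{\mu,a+2}$, an alternative is to argue by induction on $N$, deleting a smallest cube once it is covered by the others and using the geometric lemma that a cube which is covered by finitely many cubes of at least its side length has its $a$-dilate covered by their $a$-dilates (provable by a short continuity argument along the rays emanating from the cube's center). However, handling the remaining case there — a smallest cube that overlaps, yet is not contained in, the union of the others — without letting the constant degrade from step to step is the genuine obstacle, so the Vitali route above seems the cleaner option.
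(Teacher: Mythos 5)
Your proof is correct, but it takes a genuinely different route from the paper and yields a nominally weaker constant. The paper keeps all $N$ cubes and disjointifies the two families in parallel: after ordering $r_1\ge\cdots\ge r_N$ it sets $F_j:=Q(x_j,r_j)\setminus\bigcup_{k<j}F_k$ and $F_j^*:=Q(x_j,ar_j)\setminus\bigcup_{k<j}F_k^*$, proves the containment $F_j^*\subseteq\{a(y-x_j)+x_j:y\in F_j\}$, and sums $\mu(F_j^*)\le C_{\mu,a}\mu(F_j)$, so it gets the inequality with the literal constant $C_{\mu,a}$. Your Vitali-type selection instead discards cubes and pays by enlarging the dilation factor, giving $C_{\mu,a+2}$, as you acknowledge. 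That loss is immaterial for every use of the lemma in this paper: in Remark 1 and in the bounds for $|E^*|$ and $w(E^*)$ the factor is $a=2\sqrt{n}$ and constants of the form $(2\sqrt{n}+2)^n$ or $(2\sqrt{n}+2)^{np}[w]_{A_p}$ serve exactly the same purpose as $(2\sqrt{n})^n$ and $(2\sqrt{n})^{np}[w]_{A_p}$. Each approach buys something: the paper's preserves the single-cube constant exactly, while yours is more elementary (greedy selection plus the triangle inequality) and, notably, invokes the doubling hypothesis only on honest cubes $Q(x,r)\mapsto Q(x,(a+2)r)$; by contrast, the paper's step $\mu(\widetilde{F}_j)\le C_{\mu,a}\mu(F_j)$ concerns the $a$-dilate of an arbitrary subset of a cube about the cube's center, which is exact for Lebesgue measure but does not follow from cube-doubling alone for a general doubling $\mu$, so your argument is arguably the more robust one for the statement as formulated. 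Your closing remark is also accurate: the induction sketch for recovering the literal $C_{\mu,a}$ stalls on the partial-overlap case, and the paper's simultaneous disjointification is precisely the device that sidesteps it.
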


\begin{proof}
Reorder the $r_j$ to assume that $r_1\ge r_2\ge \cdots \ge r_N$. Set 
$$
    F_1:=Q(x_1,r_1),\quad F_j:=Q(x_j,r_j)\setminus \bigcup_{k=1}^{j-1}F_k, \quad \text{and} \quad F:=\bigcup_{j=1}^N F_j=\bigcup_{j=1}^N Q(x_j,r_j).
$$
Similarly, set
$$
    F_1^*:=Q(x_1,ar_1),\quad F_j^*:= Q(x_j,ar_j)\setminus \bigcup_{k=1}^{j-1}F_k^*, \quad\text{and}\quad F^*:=\bigcup_{j=1}^N F_j^*=\bigcup_{j=1}^NQ(x_j,ar_j).
$$

For each $j \in \{1,2,\ldots,N\}$, we claim that $F_j^*$ is a dilation of $F_j$ in the sense that
$$
    F_j^*\subseteq\widetilde{F}_j,
$$
where $\widetilde{F}_j:=\{a(y-x_j)+x_j:y\in F_j\}$. Note that $\mu(\widetilde{F}_j)\leq C_{\mu,a}\mu(F_j)$ since $\widetilde{F}_j$ is obtained from $F_j$ by composing a translation and a dilation by a factor of $a$. Assuming the claim, we conclude 
$$
    \mu(F^*)\leq \sum_{j=1}^{N}\mu(F_j^*)=\sum_{j=1}^N\mu(\widetilde{F}_j)\leq C_{\mu,a}\sum_{j=1}^N\mu(F_j)=C_{\mu,a}\mu(F).
$$

It remains to prove $F_j^*\subseteq\widetilde{F}_j$. Let $x \in F_j^*$. Since $F_j^*\subseteq Q(x_j,ar_j)$, we can write $x=a(y-x_j)+x_j$ for some $y \in Q(x_j,r_j)$. Since $\displaystyle Q(x_j,r_j)\subseteq\bigcup_{k=1}^j F_k$ and the $F_k$ are pairwise disjoint, $y \in F_{k_0}\subseteq Q(x_{k_0},r_{k_0})$ for some distinguished $1\leq k_0 \leq j$. Suppose that $k_0<j$. Since $r_{k_0}\ge r_j$, we have 
$$
    |x-x_{k_0}|_{\infty}=|a(y-x_j)+x_j-x_{k_0}|_{\infty}\leq (a-1)|y-x_j|_{\infty}+|y-x_{k_0}|_{\infty}
$$
$$
    <(a-1)r_j+r_{k_0}\leq ar_{k_0}.
$$
This implies $\displaystyle x \in Q(x_{k_0},ar_{k_0})\subseteq \bigcup_{k=1}^{j-1}F_K^*,$ contradicting the fact that $x\in F_j^*$. Therefore $y \in F_j$, and $x \in \widetilde{F}_j$. 

\end{proof}

\begin{remark}{1}
Lemma 2 implies
$$
    \left|\bigcup_{j=1}^N Q(x_j,ar_j)\right|\leq a^n\left|\bigcup_{j=1}^N Q(x_j,r_j)\right|,
$$
and for $w \in A_p$,
$$
    w\left(\bigcup_{j=1}^N Q(x_j,ar_j)\right)\leq a^{np}[w]_{A_p}w\left(\bigcup_{j=1}^N Q(x_j,r_j)\right).
$$
\end{remark}


\section{Unweighted Estimate}
\begin{theorem}{1}
Any Calder\'on-Zygmund operator $T$ satisfies 
$$
    \|Tf\|_{L^{1,\infty}(\mathbb{R}^n)} \lesssim\|f\|_{L^1(\mathbb{R}^n)}
$$
for all $f \in L^1(\mathbb{R}^n)$.
\end{theorem}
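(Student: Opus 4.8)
The plan is to run the Nazarov--Treil--Volberg-type scheme sketched in the introduction. Replacing $f$ by $f/\lambda$, it suffices to prove $|\{|Tf|>1\}|\lesssim\|f\|_{L^1(\mathbb{R}^n)}$, and by density we may take $f$ bounded with compact support (so that $Tf$ is defined and $f\in L^2(\mathbb{R}^n)$). Using outer regularity, fix an open set $\Omega$ with $\{|f|>1\}\subseteq\Omega$ and $|\Omega|\leq 2\|f\|_{L^1(\mathbb{R}^n)}$; if $|\{|f|>1\}|=0$ then $\|f\|_{L^2(\mathbb{R}^n)}^2\leq\|f\|_{L^1(\mathbb{R}^n)}$ and the claim is immediate from Chebyshev's inequality and the $L^2(\mathbb{R}^n)$-boundedness of $T$, so assume $\varnothing\neq\Omega\subsetneq\mathbb{R}^n$. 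Split $f=g+b$ with $g:=f\mathbbm{1}_{\mathbb{R}^n\setminus\Omega}$ and $b:=f\mathbbm{1}_\Omega$. Then $|g|\leq1$ a.e., so $\|g\|_{L^2(\mathbb{R}^n)}^2\leq\|g\|_{L^1(\mathbb{R}^n)}\leq\|f\|_{L^1(\mathbb{R}^n)}$, and Chebyshev's inequality with the boundedness of $T$ on $L^2(\mathbb{R}^n)$ gives $|\{|Tg|>1/2\}|\lesssim\|g\|_{L^2(\mathbb{R}^n)}^2\leq\|f\|_{L^1(\mathbb{R}^n)}$. Since $|\{|Tf|>1\}|\leq|\{|Tg|>1/2\}|+|\{|Tb|>1/2\}|$, it remains to control $|\{|Tb|>1/2\}|$.

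Apply a Whitney decomposition $\Omega=\bigcup_i Q_i$ into cubes $Q_i=Q(c_i,\ell_i)$ with pairwise disjoint interiors, and set $b_i:=f\mathbbm{1}_{Q_i}$ and $m_i:=\int_{Q_i}f$, so $b=\sum_i b_i$ and $\sum_i|m_i|\leq\int_\Omega|f|\leq\|f\|_{L^1(\mathbb{R}^n)}$. The core of the argument is to construct pairwise \emph{disjoint} Borel sets $E_i$, together with unimodular scalars $\epsilon_i$ (or, after reducing to $f\geq0$ by splitting into real/imaginary and positive/negative parts, simply $\epsilon_i=1$), such that $|E_i|=|m_i|$, $\epsilon_i|E_i|=m_i$, and $E_i\subseteq Q(c_i,\rho_i)$ for some $\rho_i\geq\ell_i$ with $\bigl|\bigcup_i Q(c_i,\rho_i)\bigr|\lesssim\|f\|_{L^1(\mathbb{R}^n)}$. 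I would build these greedily: processing the cubes one at a time and letting $E_i$ be any Borel subset of measure $|m_i|$ of the smallest concentric enlargement $Q(c_i,\rho_i)$ of $Q_i$ that still has measure at least $|m_i|$ after the previously chosen $E_j$ are removed (such a $\rho_i$ exists since $\sum_j|m_j|<\infty$). A Vitali covering argument, using the disjointness of the $Q_i$ and of the $E_i$ and the bound $\sum_i|m_i|\leq\|f\|_{L^1(\mathbb{R}^n)}$, then bounds $\bigl|\bigcup_i Q(c_i,\rho_i)\bigr|$. Carrying this out carefully — especially keeping the enlargements $Q(c_i,\rho_i)$ under control, which is delicate since $|f|$ can be large on a small $Q_i$, forcing $E_i$ to be much larger than $Q_i$ — is the step I expect to require the most work. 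Set $h:=\sum_i\epsilon_i\mathbbm{1}_{E_i}$ and $E^*:=\bigcup_i Q(c_i,2\sqrt{n}\,\rho_i)$. By Remark 1, $|E^*|\lesssim\bigl|\bigcup_i Q(c_i,\rho_i)\bigr|\lesssim\|f\|_{L^1(\mathbb{R}^n)}$, and since the $E_i$ are disjoint, $\|h\|_{L^2(\mathbb{R}^n)}^2=\sum_i|E_i|=\sum_i|m_i|\leq\|f\|_{L^1(\mathbb{R}^n)}$.

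It now remains to estimate, using $Tb=T(b-h)+Th$,
\[
|\{|Tb|>1/2\}|\leq|\Omega\cup E^*|+\bigl|\{x\notin\Omega\cup E^*:|T(b-h)(x)|>1/4\}\bigr|+|\{|Th|>1/4\}|.
\]
The first term is $\lesssim\|f\|_{L^1(\mathbb{R}^n)}$ by the measure estimates above. The last term is controlled just like the ``good'' term: Chebyshev's inequality and the $L^2(\mathbb{R}^n)$-boundedness of $T$ give $|\{|Th|>1/4\}|\lesssim\|h\|_{L^2(\mathbb{R}^n)}^2\leq\|f\|_{L^1(\mathbb{R}^n)}$. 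For the middle term, write $b-h=\sum_i(b_i-\epsilon_i\mathbbm{1}_{E_i})$; each $b_i-\epsilon_i\mathbbm{1}_{E_i}$ is supported in $Q(c_i,\rho_i)$ and satisfies $\int(b_i-\epsilon_i\mathbbm{1}_{E_i})=m_i-\epsilon_i|E_i|=0$, so Lemma 1 gives
\[
\int_{\mathbb{R}^n\setminus Q(c_i,2\sqrt{n}\,\rho_i)}|T(b_i-\epsilon_i\mathbbm{1}_{E_i})|\lesssim\|b_i-\epsilon_i\mathbbm{1}_{E_i}\|_{L^1(\mathbb{R}^n)}\leq 2\int_{Q_i}|f|.
\]
Because $\mathbb{R}^n\setminus(\Omega\cup E^*)\subseteq\mathbb{R}^n\setminus Q(c_i,2\sqrt{n}\,\rho_i)$ for every $i$, summing in $i$ and using Chebyshev's inequality yields $\bigl|\{x\notin\Omega\cup E^*:|T(b-h)(x)|>1/4\}\bigr|\lesssim\sum_i\int_{Q_i}|f|\leq\|f\|_{L^1(\mathbb{R}^n)}$, which finishes the proof. (The identities $b=\sum_i b_i$ and $h=\sum_i\epsilon_i\mathbbm{1}_{E_i}$ under $T$, and the pointwise bound $|T(b-h)|\leq\sum_i|T(b_i-\epsilon_i\mathbbm{1}_{E_i})|$ used in the last step, follow from the $L^2(\mathbb{R}^n)$-convergence of the partial sums — valid because the $Q_i$ and the $E_i$ are disjoint — the continuity of $T$ on $L^2(\mathbb{R}^n)$, and passage to an a.e.-convergent subsequence.)
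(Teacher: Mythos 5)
Your scheme is the same as the paper's, but you leave a genuine gap at the step you yourself flag as the hardest: bounding $\bigl|\bigcup_i Q(c_i,\rho_i)\bigr|$ by $\|f\|_{L^1(\mathbb{R}^n)}$. The Vitali covering argument you gesture at would not easily deliver this bound --- a small mass $|m_i|$ can force a large $\rho_i$ whenever the region near $c_i$ is already occupied by previously chosen $E_j$, so the disjoint cubes a Vitali selection extracts need not have total measure controlled by $\sum_i|m_i|$. The paper closes this gap with no covering lemma at all, via a small refinement of your greedy construction: rather than taking $E_i$ to be ``any'' subset of measure $a_i/\lambda$ of the available part of $Q(c_i,\rho_i)$, it takes $E_i:=Q(c_i,r_i)\setminus\bigcup_{k<i}E_k$ with $r_i$ chosen so that this set has measure \emph{exactly} $a_i/\lambda$ (with $r_i<\ell_i$ permitted). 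Then the $E_i$ are pairwise disjoint and tile $\bigcup_i Q(c_i,r_i)$, so $\bigl|\bigcup_i Q(c_i,r_i)\bigr|=\sum_i|E_i|\leq\lambda^{-1}\|f\|_{L^1(\mathbb{R}^n)}$ is immediate. Your version with $\rho_i\ge\ell_i$ can also be repaired without Vitali: if $\rho_i>\ell_i$ then minimality of $\rho_i$ forces $E_i$ to exhaust $Q(c_i,\rho_i)\setminus\bigcup_{j<i}E_j$ up to a null set, while if $\rho_i=\ell_i$ then $Q(c_i,\rho_i)=Q_i\subseteq\Omega$; hence $\bigcup_iQ(c_i,\rho_i)\subseteq\Omega\cup\bigcup_iE_i$ up to null sets, giving the bound. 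With the paper's choice the support cube for Lemma 1 should be $Q(c_i,\max(\ell_i,r_i))$, and one checks $Q(c_i,2\sqrt{n}\max(\ell_i,r_i))\subseteq\Omega\cup E^*$ using $2\sqrt{n}Q_i\subseteq\Omega$ from the Whitney property. The remaining steps --- Lemma 2 for $|E^*|\lesssim|E|$, Lemma 1 for the cancellation term, and the $L^2$ bound for $T(\mathbbm{1}_E)$ --- match the paper's argument exactly (the paper works with finite truncations $b^N$ rather than the $L^2$-limit passage you sketch, which is cleaner but inessential).
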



\begin{proof}[Proof of Theorem 1]
Let $\lambda>0$ be given. We wish to show $$|\{|Tf|>\lambda\}|\lesssim \frac{1}{\lambda}\|f\|_{L^1(\mathbb{R}^n)}.$$ By density, we may assume $f$ is a nonnegative continuous function with compact support. Set 
$$
    \Omega:=\left\{f>\lambda\right\}.
$$ 
Apply a Whitney decomposition to write 
$$
    \Omega=\bigcup_{i=1}^{\infty}Q_i,
$$ 
a disjoint union of dyadic cubes where 
$$
    2\text{diam}(Q_i)\leq d(Q_i,\mathbb{R}^n\setminus \Omega)\leq 8\text{diam}(Q_i).
$$ 
Put 
$$
    g:=f\mathbbm{1}_{\mathbb{R}^n\setminus \Omega}, \quad\quad b:=f\mathbbm{1}_{\Omega}, \quad\quad \text{and} \quad\quad b_i:=f\mathbbm{1}_{Q_i}.
$$
Then 
$$
    f=g+b=g+\sum_{i=1}^{\infty}b_i,
$$ 
where
\begin{itemize}
    \item[(1)] $\|g\|_{L^{\infty}(\mathbb{R}^n)}\leq \lambda$ and $\|g\|_{L^1(\mathbb{R}^n)} \leq \|f\|_{L^1(\mathbb{R}^n)}$,
    \item[(2)] the $b_i$ are supported on pairwise disjoint cubes $Q_i$, where $\sum_{i=1}^{\infty}|Q_i| \leq \frac{1}{\lambda}\|f\|_{L^1(\mathbb{R}^n)}$, and 
    \item[(3)] $\|b\|_{L^1(\mathbb{R}^n)}\leq \|f\|_{L^1(\mathbb{R}^n)}$.
\end{itemize}
Then
$$
    |\{|Tf|>\lambda\}|\leq \left|\left\{|Tg|>\frac{\lambda}{2}\right\}\right|+\left|\left\{|Tb|>\frac{\lambda}{2}\right\}\right|.
$$

To control the first term, use Chebyshev's inequality, the boundedness of $T$ on $L^2(\mathbb{R}^n)$, and property (1) to estimate
\begin{align*}
\left|\left\{|Tg|>\frac{\lambda}{2}\right\}\right|&\lesssim \frac{1}{\lambda^2}\int_{\mathbb{R}^n}|Tg(x)|^{2}dx\\
&\lesssim \frac{1}{\lambda^2}\int_{\mathbb{R}^n}|g(x)|^{2}dx\\
&\leq \frac{1}{\lambda}\int_{\mathbb{R}^n}|g(x)|dx\\
&\leq \frac{1}{\lambda}\|f\|_{L^1(\mathbb{R}^n)}.
\end{align*}

For positive integers $N$, set $b^N:=\sum_{i=1}^{N}b_i$. To control the second term, it suffices to handle $\left|\left\{|Tb^N|>\frac{\lambda}{2}\right\}\right|$ uniformly in $N$. Let $c_i$ denote the center of $Q_i$ and let $a_i:=\int_{Q_i}b_i(x)dx$. Set 
$$
    E_{1}:=Q(c_{1},r_{1}),
$$ 
where $r_{1}>0$ is chosen so that $|E_{1}|=\frac{a_1}{\lambda}$. In general, for $i=2, 3,\ldots,N$, set 
$$
    E_{i}:=Q(c_{i},r_{i})\setminus \bigcup_{k=1}^{i-1}E_{k},
$$ 
where $r_{i}>0$ is chosen so that $|E_{i}|=\frac{a_i}{\lambda}$. Note that such $E_i$ exist since the function $r \mapsto |Q(x,r)|$ is continuous for each $x \in \mathbb{R}^n$. Define 
$$
    E:=\bigcup_{i=1}^N E_i=\bigcup_{i=1}^N Q(c_i,r_i) \quad\quad\text{and}\quad\quad E^*:=\bigcup_{i=1}^NQ(c_i,2\sqrt{n}r_i).
$$
Then 
$$
    \left|\left\{|Tb^N|>\frac{\lambda}{2}\right\}\right|\leq \text{I}+\text{II}+\text{III},
$$
where 
\begin{align*}
    &\text{I}:=|\Omega\cup E^*|,\\
    &\text{II}:=\left|\left\{x\in \mathbb{R}^n\setminus(\Omega\cup E^*):|T(b^N-\lambda\mathbbm{1}_E)(x)|>\frac{\lambda}{4}\right\}\right|,\quad\text{and}\\
    &\text{III}:=\left|\left\{|T(\mathbbm{1}_E)|>\frac{1}{4}\right\}\right|.
\end{align*}

The control of $\text{I}$ follows from Lemma 2, Chebyshev's inequality, and property (3)
\begin{align*}
\text{I}&\leq |\Omega|+|E^*|\\
&\lesssim |\Omega|+|E|\\
&\leq \frac{1}{\lambda}\|f\|_{L^1(\mathbb{R}^n)}+\frac{1}{\lambda}\sum_{i=1}^{N}\|b_i\|_{L^1(\mathbb{R}^n)}\\
&\lesssim \frac{1}{\lambda}\|f\|_{L^1(\mathbb{R}^n)}.
\end{align*}

For $\text{II}$, use Chebyshev's inequality and Lemma 1, which applies since 
$$
    \text{supp}(b_{i}-\lambda\mathbbm{1}_{E_{i}}) \subseteq Q_{i}\cup Q(c_{i},r_{i}),\,\,\,\, \int_{\mathbb{R}^n} b_{i}(x)-\lambda\mathbbm{1}_{E_{i}}(x)dx = 0,\,\,\text{and}$$ $$2\sqrt{n}Q_{i}\cup Q(c_{i},2\sqrt{n}r_{i})\subseteq \Omega\cup E^*,
$$ 
to estimate
\begin{align*}
\text{II}&\lesssim \frac{1}{\lambda}\int_{\mathbb{R}^n\setminus(\Omega\cup E^*)}|T(b^N-\lambda\mathbbm{1}_E)(x)|dx\\
&\leq \frac{1}{\lambda}\sum_{i=1}^{N}\int_{\mathbb{R}^n\setminus (\Omega\cup E^*)}|T(b_{i}-\lambda\mathbbm{1}_{E_{i}})(x)|dx\\\
&\lesssim \frac{1}{\lambda}\sum_{i=1}^{N}\|b_{i}-\lambda\mathbbm{1}_{E_{i}}\|_{L^1(\mathbb{R}^n)}.
\end{align*}
Using the triangle inequality and property (3), we have
$$
    \text{II}\lesssim \frac{1}{\lambda}\sum_{i=1}^{N}\left(\|b_i\|_{L^1(\mathbb{R}^n)}+\|\lambda\mathbbm{1}_{E_i}\|_{L^1(\mathbb{R}^n)}\right)
    \lesssim \frac{1}{\lambda}\sum_{i=1}^{N}\|b_i\|_{L^1(\mathbb{R}^n)}
    \leq \frac{1}{\lambda}\|f\|_{L^1(\mathbb{R}^n)}.
$$

To control $\text{III}$, use Chebyshev's inequality, the boundedness of $T$ on $L^2(\mathbb{R}^n)$, and the fact that $|E|\leq \frac{1}{\lambda}\|f\|_{L^1(\mathbb{R}^n)}$ to estimate
$$
    \text{III}\lesssim \int_{\mathbb{R}^n}\left|T(\mathbbm{1}_{E})(x)\right|^{2}dx\lesssim |E|\leq  \frac{1}{\lambda}\|f\|_{L^1(\mathbb{R}^n)}.
$$

Putting all estimates together, we get 
$$
    |\{|Tf|>\lambda\}|\lesssim \frac{1}{\lambda}\|f\|_{L^1(\mathbb{R}^n)}.
$$
\end{proof}


\section{Weighted Estimate}
The main difficulty in adapting the proof of Section 3 to the weighted setting is controlling the term with the ``good'' function. The following celebrated theorem is used to handle this term (see \cite{H2012}).
\begin{theorem}{3}
If $1<p<\infty$ and $w \in A_p$, then $T$ is bounded on $L^p(w)$ and 
$$
    \|T\|_{L^p(w)\rightarrow L^p(w)} \lesssim pp'[w]_{A_p}^{\text{max}\left\{1,\frac{1}{p-1}\right\}}.
$$
\end{theorem}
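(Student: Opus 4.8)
The plan is to reduce Theorem~3 to the case $p=2$ — the $A_2$ theorem $\|T\|_{L^2(w)\to L^2(w)}\lesssim[w]_{A_2}$ for $w\in A_2$ — by quantitative Rubio de Francia extrapolation, and to prove the $A_2$ theorem by sparse domination. Sharp extrapolation with base exponent $p_0=2$ and linear dependence on $[w]_{A_2}$ yields, for every $1<p<\infty$ and $w\in A_p$, the estimate $\|T\|_{L^p(w)\to L^p(w)}\lesssim c_p\,[w]_{A_p}^{\max\{1,1/(p-1)\}}$; the exponent $\max\{1,1/(p-1)\}$ is precisely the one this extrapolation produces from an $L^2$ bound, and the constant $c_p$ comes from the $L^p$- and $L^{p'}$-operator norms of the Hardy--Littlewood maximal function ($\lesssim p'$ and $\lesssim p$, respectively) used to construct the $A_1$ majorant in the Rubio de Francia iteration, so that $c_p\lesssim pp'$. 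Thus it suffices to establish the $A_2$ theorem. (One may instead run the sparse argument below directly in $L^p$, with $\sigma=w^{1-p'}$ in place of $w^{-1}$, the factor $pp'$ reappearing through the $L^p$- and $L^{p'}$-bounds of the relevant weighted maximal operators.)

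For the $A_2$ theorem, the first and principal step is a pointwise sparse domination of $T$: for bounded, compactly supported $f$ there are finitely many (at most $3^n$) $\tfrac12$-sparse families $\mathcal S$ of cubes — families admitting pairwise disjoint sets $E_Q\subseteq Q$ with $|E_Q|\ge\tfrac12|Q|$ — such that
$$|Tf(x)|\lesssim\sum_{\mathcal S}\sum_{Q\in\mathcal S}\Big(\frac1{|Q|}\int_Q|f|\Big)\mathbbm{1}_Q(x)\qquad\text{for a.e. }x.$$
I would prove this by a local stopping-time (Calder\'on--Zygmund-type) recursion: inside a cube $Q_0\supseteq\text{supp}(f)$ one selects the maximal dyadic subcubes on which the average of $|f|$, or the grand maximal truncation $\mathcal M_T(f\mathbbm{1}_{3Q_0})$, exceeds a large dimensional multiple of $\langle|f|\rangle_{3Q_0}$, and then iterates on each selected cube. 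The weak-type $(1,1)$ bounds for the maximal function, for $T$ (Theorem~1), and for its maximal truncation operator (Cotlar's inequality, classical) force the selected cubes to occupy at most half of each parent — which gives the sparseness — while the kernel smoothness bounds $T$ off the selected cubes by the local average, and the dilations are absorbed by passing to $3^n$ adjacent dyadic systems. This step is the substance of \cite{H2012}; equivalently one may use Hyt\"onen's representation of $T$ as an average, over random dyadic lattices, of dyadic shift operators $S^{m,n}$, reducing to the bound $\|S^{m,n}\|_{L^2(w)\to L^2(w)}\lesssim(1+\max\{m,n\})^{c}[w]_{A_2}$, whose polynomial growth in the complexity is summable against the exponential decay built into the representation.

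The second step is the sharp bound $\|\mathcal A_{\mathcal S}\|_{L^2(w)\to L^2(w)}\lesssim[w]_{A_2}$, uniformly over sparse families $\mathcal S$, where $\mathcal A_{\mathcal S}h=\sum_{Q\in\mathcal S}\langle h\rangle_Q\,\mathbbm{1}_Q$ and $\langle h\rangle_Q=\tfrac1{|Q|}\int_Q h$. Put $\sigma=w^{-1}$, so $\sigma\in A_2$ with $[\sigma]_{A_2}=[w]_{A_2}$, and for nonnegative $f,h$ estimate the bilinear form $\langle\mathcal A_{\mathcal S}f,h\rangle=\sum_{Q\in\mathcal S}\langle f\rangle_Q\langle h\rangle_Q|Q|$, which suffices since $(L^2(w))^{*}=L^2(\sigma)$. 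Using $\langle f\rangle_Q=\langle\sigma\rangle_Q\,\langle fw\rangle_Q^{\sigma}$ and $\langle h\rangle_Q=\langle w\rangle_Q\,\langle h\sigma\rangle_Q^{w}$ — where $\langle\,\cdot\,\rangle_Q^{\mu}$ denotes the $\mu$-average over $Q$ — the $A_2$ inequality $\langle w\rangle_Q\langle\sigma\rangle_Q\le[w]_{A_2}$ and the elementary estimate $|Q|\le2|E_Q|\le2\,\sigma(E_Q)^{1/2}w(E_Q)^{1/2}$ (Cauchy--Schwarz, since $\sigma^{1/2}w^{1/2}\equiv1$) give
$$\langle f\rangle_Q\langle h\rangle_Q|Q|\le2[w]_{A_2}\big(\langle fw\rangle_Q^{\sigma}\,\sigma(E_Q)^{1/2}\big)\big(\langle h\sigma\rangle_Q^{w}\,w(E_Q)^{1/2}\big).$$
Summing over $Q$, applying Cauchy--Schwarz, and then using $\langle fw\rangle_Q^{\sigma}\le M^{\sigma}(fw)$ on $E_Q\subseteq Q$ together with the disjointness of the $E_Q$ and the $L^2(\sigma)$-boundedness of the $\sigma$-weighted dyadic maximal operator $M^{\sigma}$ (with an absolute constant, independent of $\sigma$), and symmetrically for $h$, one arrives at $\langle\mathcal A_{\mathcal S}f,h\rangle\lesssim[w]_{A_2}\,\|fw\|_{L^2(\sigma)}\|h\sigma\|_{L^2(w)}=[w]_{A_2}\,\|f\|_{L^2(w)}\|h\|_{L^2(\sigma)}$. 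This is the asserted bound for $\mathcal A_{\mathcal S}$; together with the sparse domination it proves the $A_2$ theorem, and then the extrapolation of the first paragraph delivers Theorem~3.

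The hardest part is the sparse domination — the passage from a general Calder\'on--Zygmund operator to a finite sum of sparse operators (equivalently, the dyadic representation theorem). Once it is in hand, both the sharp bound for $\mathcal A_{\mathcal S}$ and the extrapolation step are short and routine, so essentially all of the difficulty of Theorem~3 lies there; this is exactly the content of \cite{H2012}.
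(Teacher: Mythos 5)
The paper does not actually prove Theorem 3: it is invoked as a known black-box result, with a citation to Hyt\"onen's sharp weighted bound \cite{H2012}, and its only role here is to control the ``good'' part in the proof of Theorem 2. Your proposal, by contrast, outlines the now-standard proof of that cited result: pointwise sparse domination of $T$ (or the dyadic representation theorem), the sharp $L^2(w)$ bound for sparse operators via the $\langle f\rangle_Q=\langle\sigma\rangle_Q\langle fw\rangle_Q^{\sigma}$ factorization and the universal $L^2(\mu)$ bound for $\mu$-dyadic maximal operators, and then sharp Rubio de Francia extrapolation (or a direct $L^p$ sparse estimate with $\sigma=w^{1-p'}$) to get the exponent $\max\{1,\tfrac{1}{p-1}\}$ and the factor $pp'$. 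This is mathematically sound and is essentially how the theorem is proved in the literature, so as a roadmap it is correct; but be aware that the two hardest ingredients are asserted rather than proved: the sparse domination/representation theorem itself (which you rightly identify as the substance of \cite{H2012}), and the quantitative bookkeeping showing that the extrapolation constant is really $\lesssim pp'$ (this requires the sharp, constant-tracking version of extrapolation together with Buckley's bound for the maximal function, or else the direct $L^p$ sparse computation you mention in parentheses). Within the economy of this paper, nothing is gained over simply citing \cite{H2012} as the author does, but your sketch correctly identifies where all the difficulty lives and how the stated dependence on $p$ and $[w]_{A_p}$ arises.
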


\begin{theorem}{2}
If $1< p<\infty$ and $w\in A_p$, then 
$$
    \|T(fw)w^{-1}\|_{L^{1,\infty}(w)} \lesssim [w]_{A_p}\max\{p,\log(e+[w]_{A_p})\}\|f\|_{L^1(w)}
$$
for all $f \in L^1(w)$.
\end{theorem}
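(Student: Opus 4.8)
The plan is to adapt the unweighted argument of Section 3, replacing Lebesgue measure by the measure $w(x)\,dx$ throughout and replacing the $L^2$-boundedness of $T$ by the weighted estimate of Theorem 3. Fix $\lambda>0$ and, by density, take $f$ nonnegative, continuous, compactly supported; it suffices to show $w(\{|T(fw)w^{-1}|>\lambda\})\lesssim \frac{C_w}{\lambda}\|f\|_{L^1(w)}$ with $C_w=[w]_{A_p}\max\{p,\log(e+[w]_{A_p})\}$. Set $\Omega:=\{f>\lambda\}$, apply a Whitney decomposition $\Omega=\bigcup Q_i$, and define $g:=f\mathbbm{1}_{\mathbb{R}^n\setminus\Omega}$, $b:=f\mathbbm{1}_\Omega$, $b_i:=f\mathbbm{1}_{Q_i}$ exactly as before. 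Now, however, the natural quantities are measured with $w$: the stopping inequality gives $\|g\|_{L^\infty}\le\lambda$ and $\|g\|_{L^1(w)}\le\|f\|_{L^1(w)}$; the cubes satisfy $\sum_i w(Q_i)=w(\Omega)\le\frac1\lambda\|f\|_{L^1(w)}$ by Chebyshev; and $\|b\|_{L^1(w)}\le\|f\|_{L^1(w)}$. Split
$$
w(\{|T(fw)w^{-1}|>\lambda\})\le w\Bigl(\Bigl\{|T(gw)w^{-1}|>\tfrac\lambda2\Bigr\}\Bigr)+w\Bigl(\Bigl\{|T(bw)w^{-1}|>\tfrac\lambda2\Bigr\}\Bigr).
$$

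For the good term, the key new input is Theorem 3. Write $q$ for an exponent to be chosen (the source of the $\log$ in $C_w$); by Chebyshev in $L^p(w)$, boundedness of $T$ on $L^p(w)$, and $|g|\le\lambda$,
$$
w\Bigl(\Bigl\{|T(gw)w^{-1}|>\tfrac\lambda2\Bigr\}\Bigr)\lesssim\frac{1}{\lambda^p}\int_{\mathbb{R}^n}|T(gw)w^{-1}|^p w\,dx=\frac{1}{\lambda^p}\|T(gw)w^{-1}\|_{L^p(w)}^p.
$$
Now $\|T(gw)w^{-1}\|_{L^p(w)}=\|T(gw)\|_{L^{p}(w^{1-p})}=\|T\|_{L^p(w^{1-p})\to\cdots}$ — more cleanly, note $\|h w^{-1}\|_{L^p(w)}=\|h\|_{L^{p}(w^{1-p})}$ and $w^{1-p}=w^{1-p}\in A_{p'}$ with $[w^{1-p}]_{A_{p'}}=[w]_{A_p}^{p'-1}$, so Theorem 3 applied with exponent $p'$ and weight $\sigma:=w^{1-p}$ gives $\|T\|_{L^{p'}(\sigma)\to L^{p'}(\sigma)}\lesssim p\,p'[w]_{A_p}^{\max\{1,\frac{1}{p-1}\}\cdot(\text{something})}$; one then bounds $\|T(gw)\|_{L^{p'}(\sigma)}\lesssim \|T\|_{\cdots}\|gw\|_{L^{p'}(\sigma)}$ and uses $\|gw\|_{L^{p'}(\sigma)}^{p'}=\int |g|^{p'}w\,dx\le\lambda^{p'-1}\int g\,w\,dx=\lambda^{p'-1}\|g\|_{L^1(w)}$. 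Assembling these and then optimizing the free exponent $p'$ near $1+\frac{1}{\log(e+[w]_{A_p})}$ produces the factor $[w]_{A_p}\max\{p,\log(e+[w]_{A_p})\}$; this optimization, keeping careful track of how $[w]_{A_p}^{1/(p-1)}\approx e$ when $p-1\sim 1/\log[w]_{A_p}$, is the one genuinely delicate computation and is exactly the step performed in \cite{OPR2016}.

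For the bad term, repeat the Section 3 construction verbatim but with measure $w$: let $c_i$ be the center of $Q_i$, set $a_i:=\int_{Q_i}b_i w\,dx$ (note $b_i w\ge 0$), and choose cubes $Q(c_i,r_i)$ so that, after the disjointification $E_i:=Q(c_i,r_i)\setminus\bigcup_{k<i}E_k$, one has $w(E_i)=\frac{a_i}{\lambda}$; this is possible because $r\mapsto w(Q(c_i,r))$ is continuous and increases from $0$ to $\infty$. Put $E:=\bigcup E_i=\bigcup Q(c_i,r_i)$ and $E^*:=\bigcup Q(c_i,2\sqrt n\,r_i)$, and estimate
$$
w\Bigl(\Bigl\{|T(b^N w)w^{-1}|>\tfrac\lambda2\Bigr\}\Bigr)\le\mathrm{I}+\mathrm{II}+\mathrm{III},
$$
with $\mathrm{I}=w(\Omega\cup E^*)$, $\mathrm{II}=w(\{x\notin\Omega\cup E^*:|T((b^N-\lambda\mathbbm{1}_E)w)w^{-1}(x)|>\frac\lambda4\})$, and $\mathrm{III}=w(\{|T(\mathbbm{1}_E w)w^{-1}|>\frac14\})$. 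Term $\mathrm{I}$ is controlled by the doubling estimate for $A_p$ weights over unions of cubes (Remark 1, giving $w(E^*)\lesssim (2\sqrt n)^{np}[w]_{A_p}w(E)$) together with $w(E)=\frac1\lambda\sum_i a_i=\frac1\lambda\sum_i\|b_i\|_{L^1(w)}\le\frac1\lambda\|f\|_{L^1(w)}$ and $w(\Omega)\le\frac1\lambda\|f\|_{L^1(w)}$; this is where the $[w]_{A_p}$ factor enters cleanly. Term $\mathrm{III}$ uses Chebyshev in $L^p(w)$ and Theorem 3 again, reducing to $\|\mathbbm{1}_E w\cdot w^{-1}\|_{L^{p'}(w^{1-p})}^{p'}=w(E)\le\frac1\lambda\|f\|_{L^1(w)}$ (after the same optimization as in the good term, contributing to $C_w$).

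The crux, and the main obstacle, is term $\mathrm{II}$: one wants Lemma 1 to apply to each $b_i-\lambda\mathbbm{1}_{E_i}$ so that $T$ of it is integrable away from $2\sqrt n Q_i\cup Q(c_i,2\sqrt n r_i)\subseteq\Omega\cup E^*$, but Lemma 1 is stated for Lebesgue measure and requires $\int(b_i-\lambda\mathbbm{1}_{E_i})\,dx=0$ — whereas here the natural cancellation is $\int(b_i-\lambda\mathbbm{1}_{E_i})\,w\,dx=0$. The resolution is to apply $T$ to the function $(b_i w - \lambda w\mathbbm{1}_{E_i})$, whose \emph{Lebesgue} integral is $\int b_i w\,dx-\lambda w(E_i)=a_i-a_i=0$; this is precisely why $E_i$ was chosen to have $w$-measure $a_i/\lambda$. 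Then Lemma 1 applies to $h_i:=b_i w-\lambda w\mathbbm{1}_{E_i}\in L^1(\mathbb{R}^n)$ on the cube containing $Q_i\cup Q(c_i,r_i)$, giving $\int_{\mathbb{R}^n\setminus(2\sqrt n Q_i\cup Q(c_i,2\sqrt n r_i))}|Th_i|\,dy\lesssim\|h_i\|_{L^1(\mathbb{R}^n)}=\|b_i w\|_{L^1}+\lambda w(E_i)=2a_i$. Consequently, pulling the $w^{-1}$ out via Chebyshev with weight $w$ — $\mathrm{II}\le\frac4\lambda\int_{\mathbb{R}^n\setminus(\Omega\cup E^*)}|T(\sum h_i)w^{-1}|\,w\,dx=\frac4\lambda\int|T(\sum h_i)|\,dx$ on that set — and summing the Lemma 1 bounds gives $\mathrm{II}\lesssim\frac1\lambda\sum_i a_i\le\frac1\lambda\|f\|_{L^1(w)}$, \emph{with no $[w]_{A_p}$ factor at all}. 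Combining $\mathrm{I}$, $\mathrm{II}$, $\mathrm{III}$ and the good term yields the claimed bound; one then lets $N\to\infty$.
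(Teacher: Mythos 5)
Your treatment of the bad part is exactly the paper's argument, including its one genuinely new idea: the sets $E_i$ are chosen with $w(E_i)=a_i/\lambda$ precisely so that $h_i:=b_iw-\lambda w\mathbbm{1}_{E_i}$ has vanishing \emph{Lebesgue} integral, the unweighted Lemma 1 applies to $h_i$, Chebyshev with respect to $w\,dx$ cancels the $w^{-1}$ and leaves $\frac{1}{\lambda}\int_{\mathbb{R}^n\setminus(\Omega\cup E^*)}|T(\sum_i h_i)|\,dx$, and Remark 1 gives $w(E^*)\lesssim[w]_{A_p}w(E)$. That half is correct and matches the paper.

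The genuine gap is in the good term (and in III, which repeats it). As written your chain is inconsistent and partly wrong: you apply Chebyshev with exponent $p$, producing $\|T(gw)\|_{L^p(w^{1-p})}^p$, but then estimate an $L^{p'}(\sigma)$ norm; the duality is misstated, since $w\in A_p$ gives $w^{1-p'}\in A_{p'}$ with $[w^{1-p'}]_{A_{p'}}=[w]_{A_p}^{p'-1}$, not $w^{1-p}\in A_{p'}$, and the identity $\|gw\|_{L^{p'}(\sigma)}^{p'}=\int|g|^{p'}w\,dx$ holds only for $\sigma=w^{1-p'}$; moreover Chebyshev at exponent $q$ forces the weight $w^{1-q}$, which lies in $A_q$ iff $w\in A_{q'}$, so your displayed first step (exponent $p$, weight $w^{1-p}$) is not even applicable when $p>2$. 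More substantively, working at the fixed dual exponent $p'$ cannot yield the theorem: Theorem 3 there gives $\bigl(pp'[w^{1-p'}]_{A_{p'}}^{\max\{1,p-1\}}\bigr)^{p'}\approx p^{p'}[w]_{A_p}^{1+\frac{1}{p-1}}$, a \emph{power} of the characteristic, not a logarithm. The entire quantitative content of Theorem 2 lies in choosing the free exponent and running the numbers: one takes $r=1+\max\{p,\log(e+[w]_{A_p})\}$ (the maximum with $p$ is essential so that $r>p$, hence $w\in A_r$ and $w^{1-r'}\in A_{r'}$), applies Chebyshev at $r'$ and Theorem 3 on $L^{r'}(w^{1-r'})$ to get the factor $r^{r'}[w]_{A_r}^{r'}$, and then proves $r^{r'}\le 4\max\{p,\log(e+[w]_{A_p})\}$ (via $h(x)=\frac1x(1+x)^{1+1/x}\le4$ on $[1,\infty)$) and $[w]_{A_p}^{r'}\le e[w]_{A_p}$ (via $x^{1/\log(e+x)}\le e$). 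You gesture at this optimization but defer it to \cite{OPR2016}; since producing the bound $[w]_{A_p}\max\{p,\log(e+[w]_{A_p})\}$ is precisely what is to be proved, this step must be carried out, and your stated choice ``near $1+\frac{1}{\log(e+[w]_{A_p})}$'' omits the constraint that keeps $w$ in the relevant $A_r$ class. The same corrected choice of $r$ is needed in III.
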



\begin{proof}[Proof of Theorem 2]
Let $\lambda>0$ be given. We wish to show 
$$
    w(\{|T(fw)|w^{-1}>\lambda\})\lesssim [w]_{A_p}\max\{p,\log(e+[w]_{A_p})\}\frac{1}{\lambda}\|f\|_{L^1(w)}.
$$ 
By density, we may assume $f$ is a nonnegative continuous function with compact support. Set 
$$
    \Omega:=\left\{f>\lambda\right\}.
$$ 
Apply a Whitney decomposition to write 
$$
    \Omega=\bigcup_{i=1}^{\infty}Q_i,
$$ 
a disjoint union of dyadic cubes where 
$$
    2\text{diam}(Q_i)\leq d(Q_i,\mathbb{R}^n\setminus \Omega)\leq 8\text{diam}(Q_i).
$$ 
Put 
$$
    g:=f\mathbbm{1}_{\mathbb{R}^n\setminus \Omega}, \quad\quad b:=f\mathbbm{1}_{\Omega}, \quad\quad \text{and} \quad\quad b_i:=f\mathbbm{1}_{Q_i}.
$$ 
Then 
$$
f=g+b=g+\sum_{i=1}^{\infty}b_i,
$$ 
where
\begin{itemize}
    \item[(1)] $\|g\|_{L^{\infty}(\mathbb{R}^n)}\leq \lambda$ and $\|g\|_{L^1(w)} \leq \|f\|_{L^1(w)}$,
    \item[(2)] the $b_i$ are supported on pairwise disjoint cubes $Q_i$, where $\sum_{i=1}^{\infty}w(Q_i) \leq \frac{1}{\lambda}\|f\|_{L^1(w)}$, and 
    \item[(3)] $\|b\|_{L^1(w)}\leq \|f\|_{L^1(w)}$.
\end{itemize}
Then 
$$
    w(\{|T(fw)|w^{-1}>\lambda\})\leq w\left(\left\{|T(gw)|w^{-1}>\frac{\lambda}{2}\right\}\right)+w\left(\left\{|T(bw)|w^{-1}>\frac{\lambda}{2}\right\}\right).
$$
To control the first term, let $r>p$ be a constant to be chosen later (we will actually choose $r$ so that $r>2$ as well). Then $w \in A_r$, $[w]_{A_r}\leq [w]_{A_p}$, $w^{1-r'}\in A_{r'}$, and $\left[w^{1-r'}\right]_{A_{r'}}=[w]_{A_r}^{r'-1}$. Use Chebyshev's inequality, Theorem 3, property (1), and the facts listed above to estimate
\begin{align*}
w\left(\left\{|T(gw)|w^{-1}>\frac{\lambda}{2}\right\}\right) &\lesssim \frac{1}{\lambda^{r'}}\int_{\mathbb{R}^n}|T(gw)(x)|^{r'}w(x)^{1-r'}dx\\
&\lesssim \left(rr'\left[w^{1-r'}\right]_{A_{r'}}^{\max\left\{1,\frac{1}{r'-1}\right\}}\right)^{r'}\frac{1}{\lambda^{r'}}\int_{\mathbb{R}^n}|g(x)|^{r'}w(x)dx\\
&\lesssim r^{r'}\left[w\right]_{A_r}^{r'}\frac{1}{\lambda}\int_{\mathbb{R}^n}|g(x)|w(x)dx\\
&\leq r^{r'}\left[w\right]_{A_p}^{r'}\frac{1}{\lambda}\|f\|_{L^1(w)}.\\
\end{align*}

We next address the factors $r^{r'}$ and $[w]_{A_p}^{r'}$. First consider $r^{r'}$. Let $h(x)=\frac{1}{x}(1+x)^{1+\frac{1}{x}}$. Note that $h(1)=4$ and that $h'(x)=\frac{-1}{x^3}(1+x)^{1+\frac{1}{x}}\log(1+x) \leq 0$ for all $x \in [1,\infty)$. Thus $h(x)\leq 4$ for all $x \in [1,\infty)$. In particular, letting 
$$
    r=1+\max\{p,\log(e+[w]_{A_p})\} > 2
$$ 
and computing 
$$
    r'=1+\frac{1}{\max\{p,\log(e+[w]_{A_p})\}} < 2,
$$ 
we have 
$$
    \frac{r^{r'}}{\max\{p,\log(e+[w]_{A_p})\}}=h(\max\{p,\log(e+[w]_{A_p})\})\leq 4.
$$ 
Thus 
$$
    r^{r'}\leq 4\max\{p,\log(e+[w]_{A_p})\}.
$$ 

Now consider $[w]_{A_p}^{r'}$. Set $k(x)=x^{\frac{1}{\log(e+x)}}$. Notice that $k(1)=1$, $\displaystyle \lim_{x\rightarrow\infty}k(x) = e$, and $k'(x)=x^{\frac{1}{\log(e + x)}} \left(\frac{1}{x\log(e + x)} - \frac{\log(x)}{(e + x) (\log(e + x))^2}\right) \ge 0$ for all $x\in[1,\infty)$. Thus $1\leq k(x) \leq e$ for all $x \in [1,\infty)$. In particular, 
$$
    [w]_{A_p}^{r'-1}=[w]_{A_p}^{\frac{1}{\max\{p,\log(e+[w]_{A_p})\}}}\leq[w]_{A_p}^{\frac{1}{\log(e+[w]_{A_p})}} =k([w]_{A_p})\leq e.
$$ 
Thus 
$$
    [w]_{A_p}^{r'}\leq e[w]_{A_p}.
$$ 
Substituting this into the previous estimate yields 
$$
w\left(\left\{|T(gw)w^{-1}|>\frac{\lambda}{2}\right\}\right) \lesssim r^{r'}\left[w\right]_{A_p}^{r'}\frac{1}{\lambda}\|f\|_{L^1(w)} \lesssim [w]_{A_p}\max\{p,\log(e+[w]_{A_p})\}\frac{1}{\lambda}\|f\|_{L^1(w)}.
$$

For positive integers $N$, set $b^N:=\sum_{i=1}^Nb_i$. To control the second term, it suffices to handle $w\left(\left\{|T(b^Nw)|w^{-1}>\frac{\lambda}{2}\right\}\right)$ uniformly in $N$. Let $c_i$ denote the center of $Q_i$ and let $a_i:=\int_{Q_i}b_i(x)w(x)dx$. Set 
$$
    E_{1}:=Q(c_{1},r_{1}),
$$ 
where $r_{1}>0$ is chosen so that $w(E_{1})=\frac{a_1}{\lambda}$. In general, for $i=2, 3,\ldots,N$, set 
$$
    E_{i}:=Q(c_{i},r_{i})\setminus \bigcup_{k=1}^{i-1}E_{k},
$$ 
where $r_{i}>0$ is chosen so that $w(E_{i})=\frac{a_i}{\lambda}$. Note that such $E_i$ exist since the function $r \mapsto w(Q(x,r))$ increases to $\infty$ as $r\rightarrow \infty$, approaches $0$ as $r\rightarrow 0$, and is continuous from the right for almost every $x\in \mathbb{R}^n$. Define 
$$
    E:=\bigcup_{i=1}^{N}E_{i}=\bigcup_{i=1}^N Q(c_i,r_i) \quad\quad \text{and} \quad\quad E^*:=\bigcup_{i=1}^{\infty}Q(c_i,2\sqrt{n}r_i).
$$
Then 
$$
    w\left(\left\{|T(b^Nw)|w^{-1}>\frac{\lambda}{2}\right\}\right)\leq \text{I}+\text{II}+\text{III},
$$
where
\begin{align*}
    &\text{I}:=w(\Omega\cup E^*),\\
    &\text{II}:=w\left(\left\{x \in \mathbb{R}^n\setminus(\Omega\cup E^*):|T(b^Nw-\lambda w\mathbbm{1}_E)(x)|w(x)^{-1}>\frac{\lambda}{4}\right\}\right), \quad \text{and}\\
    &\text{III}:=w\left(\left\{|T(w\mathbbm{1}_E)|w^{-1}>\frac{1}{4}\right\}\right).
\end{align*}

The control of I follows from Lemma 2, Chebyshev's inequality, and property (3)
\begin{align*}
\text{I}&\leq w(\Omega)+w(E^*)\\
&\lesssim w(\Omega)+[w]_{A_p}w(E)\\
&\leq \frac{1}{\lambda}\|f\|_{L^1(w)}+[w]_{A_p}\frac{1}{\lambda}\sum_{i=1}^{N}\|b_i\|_{L^1(w)}\\
&\lesssim [w]_{A_p}\max\{p,\log(e+[w]_{A_p})\}\frac{1}{\lambda}\|f\|_{L^1(w)}.
\end{align*}

For $\text{II}$, use Chebyshev's inequality and Lemma 1, which applies since 
$$
    \text{supp}(b_{i}w-\lambda w\mathbbm{1}_{E_{i}}) \subseteq Q_{i}\cup Q(c_{i},r_{i}),\,\, \int_{\mathbb{R}^n} b_{i}(x)w(x)-\lambda w(x)\mathbbm{1}_{E_{i}}(x)dx = 0,\,\,\text{and}
$$
$$
    2\sqrt{n}Q_{i}\cup Q(c_{i},2\sqrt{n}r_{i}) \subseteq \Omega\cup E^*,
$$
to estimate
\begin{align*}
\text{II}&\lesssim \frac{1}{\lambda}\int_{\mathbb{R}^n\setminus(\Omega\cup E^*)}|T(b^Nw-\lambda w\mathbbm{1}_E)(x)|dx\\
&\leq \frac{1}{\lambda}\sum_{i=1}^{N}\int_{\mathbb{R}^n\setminus (\Omega\cup E^*)}|T(b_{i}w-\lambda w\mathbbm{1}_{E_{i}})(x)|dx\\
&\lesssim \frac{1}{\lambda}\sum_{i=1}^{N}\|b_{i}-\lambda\mathbbm{1}_{E_{i}}\|_{L^1(w)}.
\end{align*}
Using the triangle inequality and property (3), we have 
$$
    \text{II}\lesssim \frac{1}{\lambda}\sum_{i=1}^{N}\left(\|b_i\|_{L^1(w)}+\|\lambda\mathbbm{1}_{E_i}\|_{L^1(w)}\right) \lesssim \frac{1}{\lambda}\sum_{i=1}^N\|b_i\|_{L^1(w)}
$$
$$
    \leq [w]_{A_p}\max\{p,\log(e+[w]_{A_p})\}\frac{1}{\lambda}\|f\|_{L^1(w)}.
$$

To control III, let 
$$
    r=1+\max\{p,\log(e+[w]_{A_p})\}
$$ 
and use Chebyshev's inequality, Theorem 3, and the properties of $w$ described when bounding $w(\{|T(gw)|w^{-1}>\frac{\lambda}{2}\})$ above to estimate
$$
    \text{III}\lesssim \int_{\mathbb{R}^n}|T(w(\mathbbm{1}_{E}))|^{r'}w(x)^{1-r'}dx \lesssim \left(rr'\left[w^{1-r'}\right]_{A_{r'}}^{\max\left\{1,\frac{1}{r'-1}\right\}}\right)^{r'}w(E) \lesssim r^{r'}\left[w\right]_{A_r}^{r'}\frac{1}{\lambda}\|f\|_{L^1(w)}.
$$
As before, $r^{r'}\lesssim \max\{p,\log(e+[w]_{A_p})\}$ and $[w]_{A_p}^{r'}\lesssim [w]_{A_p}$, so
$$
    \text{III}\lesssim [w]_{A_p}\max\{p,\log(e+[w]_{A_p})\}\frac{1}{\lambda}\|f\|_{L^1(w)}.
$$

Putting all estimates together, we get 
$$
    w(\{|T(fw)|w^{-1}>\lambda\})\lesssim [w]_{A_p}\max\{p,\log(e+[w]_{A_p})\}\frac{1}{\lambda}\|f\|_{L^1(w)}.
$$
\end{proof}


\begin{bibdiv}
\begin{biblist}
\bib{CRR2018}{article}{
      author={Caldarelli, M.},
      author={Rivera-Ríos, I.~P.},
       title={A sparse approach to mixed weak type inequalities},
        date={2019},
     journal={Math. Z.},
	pages={https://doi.org/10.1007/s00209-019-02447-x}
}

\bib{CUMP2005}{article}{
title={Weighted weak-type inequalities and a conjecture of Sawyer},
author={D. Cruz-Uribe},
author={J. M. Martell},
author={C. P\'erez},
date={2005},
journal={Internat. Math. Res. Notices},
volume={30},
pages={1849--1871},
}

\bib{Grafakos1}{book}{
title={Classical Fourier analysis},
author={L. Grafakos},
publisher={Springer},
edition={Third edition},
volume={249},
address={New York},
series={Graduate Texts in Mathematics},
date={2014}
}

\bib{Grafakos2}{book}{
title={Modern Fourier analysis},
author={L. Grafakos},
publisher={Springer},
edition={Third edition},
volume={250},
address={New York},
series={Graduate Texts in Mathematics},
date={2014}
}

\bib{H2012}{article}{
title={The sharp weighted bound for general Calder\'on-Zygmund operators},
author={T. P. Hyt\"onen},
date={2012},
journal={Ann. of Math.},
volume={175},
pages={1473--1506},
}

\bib{LOP2019}{article}{
title={Proof of an extension of E. Sawyer's conjecture about weighted mixed weak-type estimates},
author={K. Li},
author={S. Ombrosi},
author={C. P\'erez},
journal={Math. Ann.},
volume={374},
date={2019},
number={1-2},
pages={907--929}
}

\bib{NTV1998}{article}{
title={Weak type estimates and Cotlar inequalities for Calder\'on-Zygmund operators on nonhomogeneous spaces},
author={F. Nazarov},
author={S. Treil},
author={A. Volberg},
volume={9},
date={1998},
journal={Internat. Math. Res. Notices},
pages={463--487},
}

\bib{OP2016}{article}{
title={Mixed weak type estimates: examples and counterexamples related to a problem of E. Sawyer},
author={S. Ombrosi},
author={C. P\'erez},
volume={145},
date={2016},
journal={Colloq. Math.},
pages={259--272},
}

\bib{OPR2016}{article}{
title={Quantitative weighted mixed weak-type inequalities for classical operators},
author={S. Ombrosi},
author={C. P\'erez},
author={J. Recchi},
volume={65},
date={2016},
journal={Indiana U. Math. J.},
pages={615--640},
}

\bib{SW2019}{article}{
title={An endpoint weak-type estimate for multilinear Calder\'on-Zygmund operators},
author={C. B. Stockdale},
author={B. D. Wick},
journal={J. Fourier Anal. Appl.},
volume={25},
date={2019},
number={5},
pages={2635--2652},
}

\end{biblist}
\end{bibdiv}

\end{document}